\theoremstyle{plain}
\newtheorem{thm}{Theorem}[section] 
\newtheorem{prop}[thm]{Proposition}
\newtheorem{corl}[thm]{Corollary}
\theoremstyle{definition}
\begin{document} 
	
\title[The primitive Milnor operations on the Dickson invariants]{A note on the action of the primitive Milnor operations on the Dickson invariants}

\author{Nguy\~\ecircumflex n Sum}
\address{Department of Mathematics and Applications, S\`ai G\`on University,
273 An D\uhorn \ohorn ng V\uhorn \ohorn ng, District 5, H\`\ocircumflex\ Ch\'i Minh city, Viet Nam}
\email{nguyensum@sgu.edu.vn}
\subjclass[2010]{Primary 55S10; Secondary 55S05}
\keywords{Polynomial algebra, cohomology operations, modular invariants}
 	 
\maketitle

\begin{abstract} 
In this paper, we present a formula for the action of the primitive Milnor operations on generators of algebra of invariants of the general linear group ${GL_n = GL(n,\mathbb F_p)}$ in the polynomial algebra $P_n= \mathbb F_p[x_1,x_2,\ldots,x_n]$ with $p$ an odd prime number. 
\end{abstract}

\section{Introduction}

Let $p$ be a prime number. Denote by $GL_n = GL(n,\mathbb F_p)$ the general linear  group over the prime field $\mathbb F_p$ of $p$ elements. This group acts on the polynomial $P_n= \mathbb F_p[x_1,x_2,\ldots,x_n]$ in the usual manner. We grade $P_n$ by assigning $\dim x_j= 1$ for $p = 2$ and $\dim x_j= 2$ for $p > 2$. 
Dickson showed in \cite{dic} that the invariant algebra $P_n^{GL_n}$ is a polynomial algebra generated by invariants $Q_{n,s}$, $0\leqslant s<n$, which are called the Dickson invariants.

Let $\mathcal{A}(p)$ be the mod $p$ Steenrod algebra and denote by
$St^{R} \in \mathcal{A}(p)$ the Milnor operation of type $R$, where $R$ is a finite sequence of non-negative integers (see Milnor \cite{mil}, M\`ui \cite{mu1,mui}). For $R = (k)$, $St^{(k)}$ is the Steenrod operation $P^k$. For $\Delta_i = (0,\ldots,0,1)$ of length $i$, $St^{\Delta_i}$ is the primitive Milnor operation in $\mathcal A(p)$. This operation was denoted by $Q^i$ in Adams and Wilkerson \cite{adw}.

The Steenrod algebra  $\mathcal{A}(p)$ acts on $P_n$ by means of the Cartan formula together with the relations $\beta(x_j) = 0$ and 
\begin{align*}
P^k(x_j) &= \begin{cases} x_j, &\mbox{if } k=0,\\  x_j^p, &\mbox{if } k=1,\\ 0, &\mbox{otherwise,}
\end{cases} 
\end{align*}
for $j = 1,\ \! 2, \ldots, n$ (see Steenrod and Epstein \cite{13}). Note that $P^k$ is the Steenrod square $Sq^k$ for $p = 2$, and $\beta$ is the Bockstein operation for $p > 2$.
Since this action commutes with  the one of $GL_n$, it induces an inherited action of $\mathcal{A}(p)$ on $P_n^{GL_n}$. 

The action of the Milnor operations on the modular invariants of linear groups has partially been studied by Smith and Switzer \cite{ssw}, Wilkerson \cite{wil} and the present author \cite{sum,su0,su1,sub,su2}. 

The purpose of the paper is to present a new formula for the action of the primitive Milnor operations $St^{\Delta_i}$ on the Dickson invariants. 

\section{Main Result}

First of all, we introduce some notations. Let  $(e_{1 },\ldots,e_n)$ be a sequence of non-negative integers. Following Dickson \cite{dic}, we define  
$$ [e_{1},e_2,  \ldots,  e_n]  =
\begin{vmatrix} 
x_1^{p^{e_{1}}}&\cdots &x_n^{p^{e_{1}}}\\
x_1^{p^{e_{2}}}&\cdots &x_n^{p^{e_{2}}}\\
\vdots&\cdots  &\vdots\\
x_1^{p^{e_n}} & \cdots & x_n^{p^{e_n}}
\end{vmatrix}. $$

Denote $L_{n,s}=[0,1,\ldots,\hat s,\ldots,n], \, 0\leqslant s\leqslant n,\ 
L_n =L_{n,n}=[0,1,\ldots,n-1].$ Each $[e_{1},e_2,\ldots,e_n]$ is divisible by $L_n$ and $[e_{1},e_2,  \ldots,  e_n]/L_n$ is an $GL_n$-invariant. Then, Dickson invariants $Q_{n,s}$ are defined by
\begin{align*} 
Q_{n,s} &= L_{n,s}/L_n, \ 0 \leqslant s < n.
\end{align*}
By convention, $Q_{n,s} = 0$ for $s < 0$. Note that $Q_{n,0} = L_n^{p-1}$. 
\begin{thm}[See Dickson \cite{dic}] $P_n^{GL_n} = \mathbb F_p[Q_{n,0},Q_{n,1},\ldots,Q_{n,n-1}].$
\end{thm}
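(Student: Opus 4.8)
The plan is to exhibit the fundamental orbit polynomial whose coefficients are the Dickson invariants, and then to combine an integrality argument with a Galois-theoretic degree count. Write $V_n \subset P_n$ for the $n$-dimensional $\mathbb F_p$-vector space spanned by $x_1,\ldots,x_n$, and set
\[
f_n(X) = \prod_{v\in V_n}(X-v),
\]
a monic polynomial of degree $p^n$ in $X$ with coefficients in $P_n$. Since $GL_n$ permutes the elements of $V_n$, each coefficient of $f_n$ is $GL_n$-invariant. First I would identify these coefficients explicitly: expanding the $(n+1)\times(n+1)$ determinant obtained from $[0,1,\ldots,n]$ by adjoining a column of powers of a new variable $X$, and cofactoring along that column, gives $\sum_{s=0}^n (-1)^s L_{n,s}\,X^{p^s}$. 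This determinant vanishes whenever $X\in V_n$, since its new column then becomes an $\mathbb F_p$-combination of the others, so it is divisible by $f_n$; comparing degrees and leading coefficients yields the key identity
\[
f_n(X) = \sum_{s=0}^n (-1)^{n-s} Q_{n,s}\, X^{p^s}, \qquad Q_{n,n}:=1.
\]

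With this in hand, set $R = \mathbb F_p[Q_{n,0},\ldots,Q_{n,n-1}]$. Because $f_n$ is monic in $X$ with coefficients in $R$ and $f_n(x_j)=0$ for each $j$, every $x_j$ is integral over $R$; hence $P_n$ is a finitely generated $R$-module and, a fortiori, so is the subring $P_n^{GL_n}$. Passing to fraction fields, put $F = \mathbb F_p(x_1,\ldots,x_n)$ and $K = \mathbb F_p(Q_{n,0},\ldots,Q_{n,n-1})$. Since $F$ is algebraic over $K$, the $n$ generators of $K$ have transcendence degree $n$ over $\mathbb F_p$ and are therefore algebraically independent, so $R$ is a genuine polynomial ring and in particular is integrally closed.

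The decisive step is the degree count. By Artin's theorem $[F:F^{GL_n}] = |GL_n| = \prod_{j=0}^{n-1}(p^n-p^j)$, and $K \subseteq F^{GL_n}$, so it suffices to prove $[F:K] \le |GL_n|$. I would build the tower $K \subseteq K(x_n) \subseteq K(x_n,x_{n-1}) \subseteq \cdots \subseteq F$ and, at the stage where $W_j = \langle x_n,\ldots,x_{n-j+1}\rangle$ has been adjoined, observe that the additive polynomial $f_{W_j}(X)=\prod_{w\in W_j}(X-w)$ has coefficients in $K(x_n,\ldots,x_{n-j+1})$ and divides $f_n(X)$. The quotient $f_n/f_{W_j}$ then has degree $p^n - p^j$, lies over the current field, and is annihilated by the next generator $x_{n-j}\notin W_j$; thus each successive extension has degree at most $p^n-p^j$ and the telescoping product gives $[F:K]\le |GL_n|$. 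Equality forces $K = F^{GL_n}$, and since $P_n^{GL_n}$ is integral over the integrally closed ring $R$ and shares its fraction field $K$, we conclude $P_n^{GL_n} = R$.

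I expect the main obstacle to be the degree count: verifying that each relative orbit polynomial $f_n/f_{W_j}$ really does have its coefficients in the intermediate field $K(x_n,\ldots,x_{n-j+1})$, which rests on the additivity of $f_{W_j}$ together with the same determinant computation applied to $W_j$, and that its degree is exactly $p^n-p^j$, so that the bounds multiply to $|GL_n|$ with no slack. The final descent then requires only the integral closedness of $R$ secured above.
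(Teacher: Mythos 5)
Your proof cannot be compared against an internal argument, because the paper gives none: this theorem is quoted from Dickson \cite{dic} as a black box. Your argument is correct and essentially complete, and it is the standard modern route --- in substance the proof in Wilkerson's primer \cite{wil}, which this paper cites for other results --- rather than Dickson's original, much longer computational induction. All three pillars hold up: (i) the orbit-polynomial identity $f_n(X)=\prod_{v\in V_n}(X-v)=\sum_{s=0}^{n}(-1)^{n-s}Q_{n,s}X^{p^s}$, where your intermediate sign $(-1)^s$ versus $(-1)^{n+s}$ in the cofactor expansion is a harmless global-sign discrepancy that washes out when you normalize by the leading coefficient $\pm L_n$; (ii) integrality of $P_n$ over $R=\mathbb F_p[Q_{n,0},\ldots,Q_{n,n-1}]$ together with the transcendence-degree count showing the $Q_{n,s}$ are algebraically independent, so $R$ is a polynomial ring and hence integrally closed; (iii) Artin's theorem plus the tower bound $[F:K]\le\prod_{j=0}^{n-1}(p^n-p^j)=|GL_n|$, which forces $F^{GL_n}=K$, followed by descent to $P_n^{GL_n}=R$ via integral closure. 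One remark: the step you flag as the main obstacle is actually trivial --- the coefficients of $f_{W_j}$ are by definition symmetric functions of the elements of $W_j$, hence polynomials in $x_n,\ldots,x_{n-j+1}$, so they lie in the intermediate field with no appeal to additivity; additivity would only be needed to express those coefficients as Dickson invariants of $W_j$, which your argument never requires. A side benefit of your route is that the expansion of the adjoined-column determinant is exactly the mechanism behind the paper's Theorem \ref{dl1}, so your proof makes that later ingredient transparent as well, whereas Dickson's original approach yields the generation statement without this structural identity in hand.
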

 The main result of the paper is the following.
 
\begin{thm}\label{dlc} For any $0\leqslant s < n$ and $ i \geqslant 1$, we have
\begin{align}\label{ct1}
St^{\Delta_i}(Q_{n,s}) &= (-1)^nQ_{n,0}(P_{n,i,s}^p + R_{n,i}^pQ_{n,s}),
\end{align}
where $P_{n,i,0} = 0$, $P_{n,i,s} = -[0,\ldots,\widehat{s-1},\ldots,n-1,i-1]/L_n$, for $s > 0$ and 
$$R_{n,i} = [0,1,\ldots,n-2,i-1]/L_n.$$
\end{thm}

Note that $R_{n,i} = -P_{n,i,n}$. The case $p=2$ of \eqref{ct1} is also proved by H\uhorn ng~ \cite{hun}. It is used to explicitly compute the mod 2 Margolis homology of the Dickson algebra for any $i$. 

\medskip
We need the following results for the proof of the theorem.

\begin{prop} [See Smith-Switzer \cite{ssw}, Wilkerson \cite{wil}]\label{dlss}  For any $0\leqslant s <n$ and $1\leqslant i\leqslant n$, we have
$$St^{\Delta_i}(Q_{n,s})=\begin{cases} (-1)^{s-1}Q_{n,0},& i=s>0,\\
	(-1)^{n}Q_{n,0}Q_{n,s},&i=n,\\
	0,\qquad &\text{otherwise.}\end{cases}$$
\end{prop}
The following proposition has been proved in \cite[Prop. 1.2]{su0} for $p = 2$ and in \cite[Prop. 1.2]{su1} for $p > 2$.

\begin{prop}
\label{dl1} For any sequence  $(e_{1 },\ldots,e_n)$ of non-negative integers, we have 
$$[e_{1},\ldots , e_{n-1}, e_n+n]  =\sum_{s=0}^{n-1}(-1)^{n+s-1}[e_{1},\ldots,e_{n-1}, e_n+s]Q_{n,s}^{p^{e_n}}.$$
\end{prop}
Below is an extension of Proposition \ref{dlss}.
\begin{thm}\label{dl2} 
For any $0\leqslant s <n$ and $i \geqslant 1$, we have
$$St^{\Delta_i}(Q_{n,s}) = (-1)^n[0,1,\ldots,\hat s, \ldots,n-1,i]L_n^{p-2}.$$
\end{thm}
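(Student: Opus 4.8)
The plan is to start from the determinantal description in Theorem \ref{dl2} and convert it into the claimed form by elementary manipulations of the brackets $[e_1,\ldots,e_n]$. Since $Q_{n,0}=L_n^{p-1}$, substituting Theorem \ref{dl2} into the two sides of Theorem \ref{dlc} and cancelling the common factor $(-1)^nL_n^{p-2}$ shows that the assertion is equivalent to the single polynomial identity
\[
[0,1,\ldots,\widehat{s},\ldots,n-1,i]=L_n\bigl(P_{n,i,s}^p+R_{n,i}^pQ_{n,s}\bigr).
\]
The first step is to clear the $p$-th powers. In characteristic $p$ the Frobenius is a ring homomorphism and acts on a bracket by shifting every exponent, $[e_1,\ldots,e_n]^p=[e_1+1,\ldots,e_n+1]$. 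Applying this to the definitions of $P_{n,i,s}$ and $R_{n,i}$, using $Q_{n,s}=L_{n,s}/L_n$ together with $L_n^p=[1,2,\ldots,n]$, the displayed identity becomes a relation purely among $n\times n$ minors of the matrix $\bigl(x_j^{p^e}\bigr)$, of the three-term shape
\[
[1,\ldots,n]\,[0,\ldots,\widehat{s},\ldots,n-1,i]=[1,\ldots,n-1,i]\,[0,\ldots,\widehat{s},\ldots,n]\pm[0,1,\ldots,n-1]\,[1,\ldots,\widehat{s},\ldots,n,i],
\]
the precise sign in front of the last product (which corresponds to the sign in front of $P_{n,i,s}^p$) being fixed below. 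All six brackets share the common exponents $\{1,\ldots,n-1\}\setminus\{s\}$ and differ only in how the four remaining indices $0,s,n,i$ are distributed between the two factors of each product.

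I would prove this bracket identity by strong induction on $i$, using Theorem \ref{dl1} to lower the top exponent. For $i>n$ write $i=(i-n)+n$ and apply Theorem \ref{dl1} simultaneously to the three brackets whose last exponent is $i$. The crucial point is that the coefficients $(-1)^{n+t-1}Q_{n,t}^{p^{\,i-n}}$ produced by Theorem \ref{dl1} depend only on $t$ and on $i-n$, not on the remaining row-exponents; hence each of the three brackets is expanded against the same family of Dickson powers. Collecting the coefficient of $Q_{n,t}^{p^{\,i-n}}$ converts the identity for top exponent $i$ into the same identity for the strictly smaller top exponent $(i-n)+t\leqslant i-1$, which holds by the induction hypothesis, so the whole expression telescopes to zero and the identity for $i$ follows.

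It remains to treat the base cases $1\leqslant i\leqslant n$. Here the brackets degenerate: whenever $i$ coincides with one of the fixed exponents the corresponding minor vanishes, and when $i=s$ the brackets collapse to $\pm L_n$ after reordering their rows, with explicit permutation signs such as $(-1)^{n-s-1}$ and $(-1)^{n-s}$. Matching these collapses with the three cases of Theorem \ref{dlss} simultaneously verifies the base cases and pins down the global sign in the three-term relation. The subcase $s=0$ is immediate and separate, since there $P_{n,i,0}=0$ and the identity reduces to $[1,2,\ldots,n-1,i]=L_nR_{n,i}^pQ_{n,0}$, which is just the Frobenius shift of the definition of $R_{n,i}$. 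The main obstacle I anticipate is precisely this sign bookkeeping: correctly accounting for the permutation signs when the appended exponent $i$ (or $i-1$, before the Frobenius shift) falls below the other exponents, and making these agree with the signs forced by Theorem \ref{dlss}. A secondary technical point is to keep every intermediate quotient by $L_n$ inside $P_n$; this is guaranteed because every bracket $[e_1,\ldots,e_n]$ is divisible by $L_n$, so $P_{n,i,s}$, $R_{n,i}$, and all the minors produced by Theorem \ref{dl1} remain honest polynomials throughout.
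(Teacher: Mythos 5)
Your proposal has a circularity that prevents it from proving the stated theorem. The statement to be proved is Theorem \ref{dl2} itself, yet your very first step is to ``start from the determinantal description in Theorem \ref{dl2}'' and substitute it into Theorem \ref{dlc}; what you then establish is the \emph{equivalence} of the two theorems, reduced to the pure bracket identity $[0,\ldots,\hat s,\ldots,n-1,i]=L_n\bigl(P_{n,i,s}^p+R_{n,i}^pQ_{n,s}\bigr)$. Your induction via Theorem \ref{dl1} does prove that identity --- indeed it reproduces, almost step for step, the paper's proof of Theorem \ref{dlc}: the separate $s=0$ case by the Frobenius shift of $R_{n,i}$, base cases $1\leqslant i\leqslant n$ matched against Theorem \ref{dlss}, and the same telescoping of $Q_{n,t}^{p^{i-n}}$-coefficients via Theorem \ref{dl1} --- but it cannot yield Theorem \ref{dl2}. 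Within this paper Theorem \ref{dlc} is itself deduced \emph{from} Theorem \ref{dl2}: the first line of the paper's inductive step is precisely the application of Theorem \ref{dl2} to $St^{\Delta_{i+1}}(Q_{n,s})$. So invoking \ref{dlc} (or the equivalence) to obtain \ref{dl2} begs the question.

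The concrete missing idea is any independent computation of $St^{\Delta_i}(Q_{n,s})$ for $i>n$. Theorem \ref{dl1} is a statement purely about determinants and carries no information whatsoever about the Steenrod action, and Theorem \ref{dlss} covers only $1\leqslant i\leqslant n$; nothing in your argument relates $St^{\Delta_{i+1}}$ for $i\geqslant n$ to lower operations. Note also that this paper contains no proof of Theorem \ref{dl2}; it is imported from \cite{su2}, where the argument rests on the fact that $St^{\Delta_i}$ is a Milnor primitive, hence acts as a derivation on $P_n$ with $St^{\Delta_i}(x_j)=x_j^{p^i}$. Since a derivation annihilates $p$-th powers, acting on $L_{n,s}=[0,\ldots,\hat s,\ldots,n]$ only the exponent-$0$ row contributes, replacing it by the exponent-$i$ row; the quotient rule applied to $Q_{n,s}=L_{n,s}/L_n$ together with a determinantal (Pl\"ucker-type) identity then gives the single-bracket formula for all $i\geqslant 1$ at once. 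Alternatively one could induct on $i$ using the commutator relation $St^{\Delta_{i+1}}=P^{p^i}St^{\Delta_i}-St^{\Delta_i}P^{p^i}$ in $\mathcal{A}(p)$. Your proposal uses neither; the bracket bookkeeping you outline is sound as far as it goes, but what it proves is a reformulation of Theorem \ref{dlc}, not Theorem \ref{dl2}.
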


\begin{proof} The theorem has been proved in \cite[Thm 3.1]{su2} for $p > 2$. To make the paper self-contained, we give here a proof of it for $p$ an arbitrary prime.
	
Since $L_{n,s}=L_nQ_{n,s}$ and $St^{\Delta_i}$ is a derivation, we have
	\begin{equation} \label{ct4} St^{\Delta_i}(L_{n,s})= L_nSt^{\Delta_i}(Q_{n,s})+Q_{n,s}St^{\Delta_i}(L_{n}).
	\end{equation}
From \cite[Thm 1.1]{su0} for $p=2$ and \cite[Thm 1.1]{su1} for $p > 2$, we obtain
$$St^{\Delta_i}(L_{n,s})= \begin{cases} [i,1,2,\ldots,\hat s,\ldots,n],&s>0,\\ 0,&s=0.\end{cases}$$
In particular,  $St^{\Delta_i}(L_{n})=[i,1,2,\ldots,n-1]$. 
	
If $s=0$, then $St^{\Delta_i}(L_{n,s})=0$ and 
\begin{align*}St^{\Delta_i}(L_{n})&=[i,1,2,\ldots,n-1]\\ 
& = (-1)^{n-1}[1,2,\ldots,n-1,i].
\end{align*}
Combining \eqref{ct4}, the above equalities and the relation $Q_{n,0}=L_n^{p-1}$,  we have
\begin{align*} St^{\Delta_i}(Q_{n,0})&= -St^{\Delta_i}(L_n) Q_{n,0}/L_n\\
&=(-1)^{n}[1,2,\ldots,n-1,i]Q_{n,0}/L_n\\
&= (-1)^{n}[1,2,\ldots,n-1,i]L_n^{p-2}.
\end{align*}
Hence, the theorem holds.
	
If  $s>0$, then 
$St^{\Delta_i}(L_{n,s}) = [i,1,2,\ldots,\hat s,\ldots, n].$
So, using Proposition \ref{dl1}, we get
\begin{align*}St^{\Delta_i}(L_{n,s})  &=\sum_{t=0}^{n-1}(-1)^{n-1+t}[i,1,2,\ldots,\hat s,\ldots,n-1, t]Q_{n,t}\\
&=(-1)^{n-1} [i,1,2,\ldots,\hat s,\ldots,n-1, 0]Q_{n,0}\\
&\qquad +(-1)^{n-1+s}[i,1,2,\ldots ,\hat s,\ldots ,n-1, s]Q_{n,s}\\
&= [i,1,2,\ldots,n-1]Q_{n,s}-[i,0,1,\ldots,\hat s,\ldots,n-1]Q_{n,0}.
\end{align*}
Combining \eqref{ct4}, the above equalities and the relation $Q_{n,0}=L_n^{p-1}$, we obtain
\begin{align*}St^{\Delta_i}(Q_{n,s})&= (St^{\Delta_i}(L_{n,s})-Q_{n,s}St^{\Delta_i}(L_{n}))/L_n\\
&=-[i,0,1,2,\ldots,\hat s,\ldots,n-1]Q_{n,0}/L_n\\
&=(-1)^n[0,1,2,\ldots,\hat s,\ldots, n-1,i]L_n^{p-2}.
\end{align*}
This completes the proof of the theorem.
\end{proof}

We now prove Theorem \ref{dlc}.
\begin{proof}[Proof of Theorem \ref{dlc}] By Theorem \ref{dl2}, we have
\begin{align*}
St^{\Delta_i}(Q_{n,0}) &= (-1)^n[1,2,\ldots,n-1,i]L_n^{p-2}\\
&= (-1)^n\big([0,1,\ldots,n-2,i-1]/L_n\big)^pL_n^{2p-2}\\ 
&= (-1)^nR_{n,i}^pQ_{n,0}^2.
\end{align*}
Hence, the theorem is true for $s = 0$. 

Assume that $s > 0$.
We prove the theorem by induction on $i$. By Proposition \ref{dlss}, the theorem is true for $1 \leqslant i \leqslant n$. Suppose that $i \geqslant n$ and the theorem holds for $1,2,\ldots, i$. Using Proposition \ref{dl1}, Theorem \ref{dl2} and the inductive hypothesis, we get
\begin{align*}
St^{\Delta_{i+1}}(Q_{n,s}) &= (-1)^n[0,1,\ldots,\hat s, \ldots,n-1,i+1]L_n^{p-2}\\
&=\sum_{t=0}^{n-1}(-1)^{t-1}[0,\ldots,\hat s, \ldots ,n-1, i-n + 1 +t]Q_{n,t}^{p^{i-n+1}}L_n^{p-2}\\
&= \sum_{t=0}^{n-1}(-1)^{n+ t-1}St^{\Delta_{i-n+1+t}}(Q_{n,s})Q_{n,t}^{p^{i-n+1}}\\
&= \sum_{t=0}^{n-1}(-1)^{t-1}Q_{n,0}(P_{n,i-n + 1 + t,s}^p + R_{n,i-n+1+t}^pQ_{n,s})Q_{n,t}^{p^{i-n+1}}\\
&= (-1)^nQ_{n,0}\Big(\sum_{t=0}^{n-1}(-1)^{n + t-1}P_{n,i-n + 1 + t,s}^pQ_{n,t}^{p^{i-n+1}}\\ &\hskip2cm + \Big(\sum_{t=0}^{n-1}(-1)^{n + t-1}R_{n,i-n + 1 + t}^pQ_{n,t}^{p^{i-n+1}}\Big)Q_{n,s}\Big).
\end{align*}
Using Proposition \ref{dl1}, we have
\begin{align*}
\sum_{t=0}^{n-1}&(-1)^{n + t-1}P_{n,i-n + 1 + t,s}^pQ_{n,t}^{p^{i-n+1}}\\ &= \sum_{t=0}^{n-1}(-1)^{n + t-1}\big(-[0,\ldots,\widehat{s-1},\ldots,n-1,i-n+t]/L_n\big)^pQ_{n,t}^{p^{i-n+1}}\\
&= \Big(\Big(-\sum_{t=0}^{n-1}(-1)^{n + t-1}[0,\ldots,\widehat{s-1},\ldots,n-1,i-n+t]Q_{n,t}^{p^{i-n}}\Big)/L_n\Big)^p\\
&= \big(-[0,\ldots,\widehat{s-1},\ldots,n-1,i]/L_n\big)^p = P_{n,i+1,s}^p.
\end{align*}
By a similar computation using Proposition \ref{dl1}, we obtain
\begin{align*}
\sum_{t=0}^{n-1}&(-1)^{n + t-1}R_{n,i-n + 1 + t}^pQ_{n,t}^{p^{i-n+1}}\\ &= \sum_{t=0}^{n-1}(-1)^{n + t-1}\big([0,1,\ldots,n-2,i-n+t]/L_n\big)^pQ_{n,t}^{p^{i-n+1}}\\
&= \Big(\Big(\sum_{t=0}^{n-1}(-1)^{n + t-1}[0,1,\ldots,n-2,i-n+t]Q_{n,t}^{p^{i-n}}\Big)/L_n\Big)^p\\
&= \big([0,1,\ldots,n-2,i]/L_n\big)^p = R_{n,i+1}^p.
\end{align*}
Thus, the theorem is true for $i+1$. So, the proof is completed.
\end{proof}

Using Theorem \ref{dlc} and Proposition \ref{dl1}, we can explicitly compute the action of $St^{\Delta_i}$ on the Dickson invariants $Q_{n,s}$ for $i > n$ by explicitly computing $P_{n,i,s}$ and $R_{n,i}$. The cases $i = n + 1,\, n+2$ have been computed in \cite{su2} by using Theorem \ref{dl2}.
\begin{corl}[See \cite{su2}] For $0\leqslant s <n$, we have
\begin{align*} 
St^{\Delta_{n+1}}(Q_{n,s}) &= (-1)^nQ_{n,0}(-Q_{n,s-1}^p + Q_{n,n-1}^{p}Q_{n,s}),\\
St^{\Delta_{n+2}}(Q_{n,s}) &= (-1)^nQ_{n,0}\big(Q_{n,s-2}^{p^2} - Q_{n,s-1}^pQ_{n,n-1}^{p^2} + (Q_{n,n-1}^{p^2+p} - Q_{n,n-2}^{p^2})Q_{n,s}\big).
\end{align*} 
\end{corl}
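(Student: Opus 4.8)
The plan is to obtain both identities as direct specializations of Theorem \ref{dlc}, computing the invariants $P_{n,i,s}$ and $R_{n,i}$ explicitly for $i=n+1$ and $i=n+2$. Substituting these two values of $i$ into the formula $St^{\Delta_i}(Q_{n,s})=(-1)^nQ_{n,0}(P_{n,i,s}^p+R_{n,i}^pQ_{n,s})$ reduces the whole problem to evaluating the four bracket quotients $P_{n,n+1,s}$, $R_{n,n+1}$, $P_{n,n+2,s}$, $R_{n,n+2}$ inside $\mathbb F_p[Q_{n,0},\ldots,Q_{n,n-1}]$, after which one raises to the $p$-th power and collects terms.

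The case $i=n+1$ is the warm-up: the relevant brackets are $[0,\ldots,\widehat{s-1},\ldots,n-1,n]$ and $[0,1,\ldots,n-2,n]$, each of which is a determinant whose exponent set is $\{0,1,\ldots,n\}$ with exactly one value omitted and no repetition. Reordering the rows into increasing order therefore turns each into $\pm L_{n,t}$ for the appropriate $t$, and dividing by $L_n$ gives $\pm Q_{n,t}$ via the dictionary $L_{n,t}=L_nQ_{n,t}$. This yields $R_{n,n+1}=Q_{n,n-1}$ and $P_{n,n+1,s}=\pm Q_{n,s-1}$, and the first displayed formula drops out of Theorem \ref{dlc} once each factor is raised to the $p$-th power.

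For $i=n+2$ the last bracket entry is $n+1>n$, so I would first apply the reduction identity of Theorem \ref{dl1} with $e_n=1$, writing
\begin{align*}
[0,\ldots,\widehat{s-1},\ldots,n-1,n+1]&=\sum_{t=0}^{n-1}(-1)^{n+t-1}[0,\ldots,\widehat{s-1},\ldots,n-1,t+1]\,Q_{n,t}^{p}
\end{align*}
together with the analogous expansion of $[0,1,\ldots,n-2,n+1]$. The decisive observation is that a summand survives only when the new last entry $t+1$ does not already occur among the first $n-1$ exponents; every other summand has two equal rows and vanishes. For each bracket exactly two values of $t$ survive—the one for which $t+1$ fills the omitted slot and the one with $t+1=n$—and the surviving brackets are again of the already-evaluated $[\ldots,n]$-type. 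Collecting them gives $P_{n,n+2,s}=\pm(Q_{n,s-1}Q_{n,n-1}^p-Q_{n,s-2}^p)$ and $R_{n,n+2}=Q_{n,n-1}^{p+1}-Q_{n,n-2}^p$; raising these to the $p$-th power with the Frobenius identity $(a-b)^p=a^p-b^p$ over $\mathbb F_p$ produces the exponents $p^2$ and $p^2+p$ and, after substitution into Theorem \ref{dlc}, the second displayed formula.

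The one genuinely delicate point is the sign bookkeeping: one must correctly track the permutation sign incurred in sorting each determinant's rows and combine it with the $(-1)^{n+t-1}$ weights from Theorem \ref{dl1} and the overall $(-1)^n$ of Theorem \ref{dlc}. Once the vanishing summands are discarded this is the only place an error can creep in, since the structural ingredients—the $[\ldots,n]$ evaluation, the single application of Theorem \ref{dl1}, and the additivity of Frobenius—are otherwise entirely routine.
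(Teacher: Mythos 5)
Your overall strategy is the one the paper itself points to before this corollary (specialize Theorem \ref{dlc} and evaluate $P_{n,i,s}$ and $R_{n,i}$ by row sorting plus one application of Theorem \ref{dl1}), and your structural bookkeeping is right: for $i=n+1$ both brackets are already of $L_{n,t}$-type, for $i=n+2$ exactly the summands $t=s-2$ and $t=n-1$ (resp.\ $t=n-2$ and $t=n-1$) survive, and indeed $R_{n,n+1}=Q_{n,n-1}$, $R_{n,n+2}=Q_{n,n-1}^{p+1}-Q_{n,n-2}^{p}$. But the step you defer --- resolving your ``$\pm$'' signs --- is exactly where the argument breaks, and no amount of careful bookkeeping can repair it in the form you propose. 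The exponent sequence of $[0,\ldots,\widehat{s-1},\ldots,n-1,n]$ is already strictly increasing, so the sorting sign is $+1$ and $P_{n,n+1,s}=+Q_{n,s-1}$; likewise $P_{n,n+2,s}=+\bigl(Q_{n,s-1}Q_{n,n-1}^{p}-Q_{n,s-2}^{p}\bigr)$. Substituting these into Theorem \ref{dlc} \emph{as printed} gives $St^{\Delta_{n+1}}(Q_{n,s})=(-1)^nQ_{n,0}\bigl(Q_{n,s-1}^{p}+Q_{n,n-1}^{p}Q_{n,s}\bigr)$, and similarly for $i=n+2$, i.e.\ the $P$-part comes out with the opposite sign to the corollary; for odd $p$ this is a genuine discrepancy (for $p=2$ signs are immaterial and your argument is fine).

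The source of the trouble is not your computation but Theorem \ref{dlc} itself, whose $P$-term carries a sign error: at the base case $i=s>0$ one has $P_{n,s,s}=[0,\ldots,\widehat{s-1},\ldots,n-1,s-1]/L_n=(-1)^{n-s}$, so Theorem \ref{dlc} as stated would give $St^{\Delta_s}(Q_{n,s})=(-1)^{s}Q_{n,0}$, contradicting the $(-1)^{s-1}Q_{n,0}$ of Theorem \ref{dlss} (one can also check directly that $P^1(Q_{2,1})=Q_{2,0}$, not $-Q_{2,0}$). The corollary you were asked to prove is the correct statement --- it agrees with Theorems \ref{dlss}, \ref{dl2} and \ref{dl1}. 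So to complete your proof you must do one of two things: (a) first establish the corrected identity $St^{\Delta_i}(Q_{n,s})=(-1)^nQ_{n,0}\bigl(-P_{n,i,s}^{p}+R_{n,i}^{p}Q_{n,s}\bigr)$ --- the paper's induction goes through verbatim with this sign, since the recursion coming from Theorem \ref{dl1} is linear in the $P$'s and the corrected base case now matches Theorem \ref{dlss} --- and then run your substitution; or (b) bypass Theorem \ref{dlc} entirely and expand $[0,\ldots,\hat s,\ldots,n-1,n+1]$ and $[0,\ldots,\hat s,\ldots,n-1,n+2]$ directly from Theorems \ref{dl2} and \ref{dl1}, which is how these two cases were originally obtained in \cite{su2} and which produces the stated signs without reference to $P_{n,i,s}$ and $R_{n,i}$.
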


By a direct calculation using Proposition \ref{dl1}, we easily obtain the following.
\begin{corl} For $0\leqslant s <n$, we have
$$St^{\Delta_{n+3}}(Q_{n,s}) = (-1)^nQ_{n,0}(P_{n,n+3,s}^p + R_{n,n+3}^pQ_{n,s}),$$ 
where
\begin{align*}
P_{n,n+3,s}&=-Q_{n,s-3}^{p^2}+Q_{n,s-2}^pQ_{n,n-1}^{p^2}+Q_{n,s-1}Q_{n,n-2}^{p^2}-Q_{n,s-1}Q_{n,n-1}^{p^2+p},\\
R_{n,n+3}&=Q_{n,n-3}^{p^2}-Q_{n,n-2}^{p^2}Q_{n,n-1}-Q_{n,n-2}^pQ_{n,n-1}^{p^2}+Q_{n,n-1}^{p^2+p+1}.
\end{align*}
\end{corl}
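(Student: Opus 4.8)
The plan is to treat this as a direct consequence of Theorem~\ref{dlc} specialized to $i = n+3$, so that everything reduces to evaluating the two Dickson-invariant quotients
\begin{align*}
P_{n,n+3,s} &= [0,\ldots,\widehat{s-1},\ldots,n-1,n+2]/L_n,\\
R_{n,n+3} &= [0,1,\ldots,n-2,n+2]/L_n,
\end{align*}
and expressing each numerator as $L_n$ times a polynomial in the $Q_{n,t}$. First I would handle $R_{n,n+3}$, since it is essentially the $s$-free version of the same computation and fixes the pattern of signs.

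The main engine is Theorem~\ref{dl1}, which lowers the top exponent: writing $n+2 = 2+n$ and applying the formula with $e_n = 2$ expands $[0,1,\ldots,n-2,n+2]$ as $\sum_{t=0}^{n-1}(-1)^{n+t-1}[0,1,\ldots,n-2,t+2]Q_{n,t}^{p^2}$. The key simplification is that a bracket with a repeated exponent vanishes, so only those $t$ with $t+2\notin\{0,1,\ldots,n-2\}$ survive, namely $t = n-3,\,n-2,\,n-1$, with top exponents $n-1,\,n,\,n+1$. The term with top exponent $n-1$ is already $L_n$, while the brackets $[0,1,\ldots,n-2,n]$ and $[0,1,\ldots,n-2,n+1]$ are reduced one level further by the same device (with $e_n = 0$ and $e_n = 1$ respectively); each collapses to a single surviving term. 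Substituting these back and dividing by $L_n$ should produce exactly the claimed $R_{n,n+3}$.

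For $P_{n,n+3,s}$ the same three-step reduction applies to $[0,\ldots,\widehat{s-1},\ldots,n-1,n+2]$, the only new feature being the gap at $s-1$: now a bracket survives either when its top exponent exceeds $n-1$ or when it lands exactly on the missing value $s-1$, the latter recovering $L_n$ after reordering. The hard part will be the sign bookkeeping. Restoring $L_n$ by moving the reinserted exponent $s-1$ from the last row to its sorted position costs a factor $(-1)^{n-s}$, which must be combined with the $(-1)^{n+t-1}$ coming from Theorem~\ref{dl1}; I expect these to cancel cleanly (for instance, the $Q_{n,s-3}^{p^2}$ term should acquire total sign $(-1)^{2n}=1$). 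Carrying out the three nested expansions while tracking these signs, then dividing by $L_n$, should yield the stated $P_{n,n+3,s}$; as a consistency check, the computation of $R_{n,n+3}$ is recovered as the formal specialization $s = n$, where the missing exponent is $n-1$.
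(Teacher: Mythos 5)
Your overall strategy is exactly the paper's: specialize Theorem \ref{dlc} to $i=n+3$ and evaluate $P_{n,n+3,s}$ and $R_{n,n+3}$ by iterated application of Theorem \ref{dl1}, using the vanishing of brackets with repeated exponents and the sign $(-1)^{n-s}$ from re-sorting rows. Your sign bookkeeping (e.g.\ the total sign $(-1)^{2n}=1$ on the $Q_{n,s-3}^{p^2}$ term) and the consistency check that $R_{n,n+3}$ is the formal specialization $s=n$ of $P_{n,n+3,s}$ are both correct.

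One step, however, is misdescribed and, taken literally, would lose a monomial. You claim that $[0,1,\ldots,n-2,n]$ and $[0,1,\ldots,n-2,n+1]$ each collapse to a \emph{single} surviving term. That is true for the first: with $e_n=0$ only $t=n-1$ survives, giving $L_nQ_{n,n-1}$. But for $[0,1,\ldots,n-2,n+1]$, with $e_n=1$ the surviving indices are those with $1+t\geqslant n-1$, namely \emph{two}: $t=n-2$ (bracket $=L_n$) and $t=n-1$ (bracket $=[0,1,\ldots,n-2,n]=L_nQ_{n,n-1}$), so that $[0,1,\ldots,n-2,n+1]=L_n\big(Q_{n,n-1}^{p+1}-Q_{n,n-2}^{p}\big)$. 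That second surviving term is precisely the source of the fourth monomial $Q_{n,n-1}^{p^2+p+1}$ in $R_{n,n+3}$; likewise, in the expansion of $[0,\ldots,\widehat{s-1},\ldots,n-1,n+1]$ the two survivors are $t=s-2$ and $t=n-1$, the latter producing $Q_{n,s-1}Q_{n,n-1}^{p^2+p}$ in $P_{n,n+3,s}$. With only one surviving term at this stage you would arrive at a three-monomial answer, contradicting the four-monomial formula you correctly state as the target. Since a mechanical application of Theorem \ref{dl1} produces both terms regardless of this anticipation, the computation self-corrects when actually carried out, but as written the plan undercounts at exactly the step that distinguishes the $i=n+3$ case from the easier $i=n+1$ one.
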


\begin{corl} For any $0\leqslant s <n$ and $i \geqslant 1$, we have
\begin{align*}
St^{\Delta_{i}}(Q_{n,0}^{p-1}Q_{n,s}) = (-1)^nQ_{n,0}^{p}P_{n,i,s}^p 
\in \mathbb \mbox{\rm Ker}(St^{\Delta_{i}}).
\end{align*}
\end{corl}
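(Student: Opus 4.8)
The plan is to exploit the fact that $St^{\Delta_i}$, being a primitive element of the Hopf algebra $\mathcal{A}(p)$ of even degree $2(p^i-1)$, acts on $P_n^{GL_n}$ as an (ordinary, sign-free) derivation, so that the Leibniz rule $St^{\Delta_i}(fg) = St^{\Delta_i}(f)g + fSt^{\Delta_i}(g)$ holds. Everything then reduces to Theorem~\ref{dlc} together with elementary manipulations in characteristic $p$.

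First I would compute $St^{\Delta_i}(Q_{n,0}^{p-1})$. Applying the derivation rule to a power gives $St^{\Delta_i}(Q_{n,0}^{p-1}) = (p-1)Q_{n,0}^{p-2}St^{\Delta_i}(Q_{n,0}) = -Q_{n,0}^{p-2}St^{\Delta_i}(Q_{n,0})$, using $p-1 \equiv -1$ in $\mathbb F_p$. By the $s = 0$ case of Theorem~\ref{dlc}, in which $P_{n,i,0} = 0$, one has $St^{\Delta_i}(Q_{n,0}) = (-1)^nR_{n,i}^pQ_{n,0}^2$, so that $St^{\Delta_i}(Q_{n,0}^{p-1}) = -(-1)^nR_{n,i}^pQ_{n,0}^p$.

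Next I would expand $St^{\Delta_i}(Q_{n,0}^{p-1}Q_{n,s}) = St^{\Delta_i}(Q_{n,0}^{p-1})Q_{n,s} + Q_{n,0}^{p-1}St^{\Delta_i}(Q_{n,s})$ by the Leibniz rule and substitute. Inserting the value just obtained together with the full formula $St^{\Delta_i}(Q_{n,s}) = (-1)^nQ_{n,0}(P_{n,i,s}^p + R_{n,i}^pQ_{n,s})$ of Theorem~\ref{dlc}, the two contributions carrying $R_{n,i}^pQ_{n,0}^pQ_{n,s}$ occur with opposite signs and cancel, leaving precisely $(-1)^nQ_{n,0}^pP_{n,i,s}^p$. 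This proves the first equality.

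For the membership in $\mbox{\rm Ker}(St^{\Delta_i})$, I would observe that the right-hand side is a $p$-th power up to the unit scalar $(-1)^n$, namely $Q_{n,0}^pP_{n,i,s}^p = (Q_{n,0}P_{n,i,s})^p$. Since $St^{\Delta_i}$ is a derivation over $\mathbb F_p$, it annihilates every $p$-th power: iterating the Leibniz rule yields $St^{\Delta_i}(f^p) = pf^{p-1}St^{\Delta_i}(f) = 0$. Hence $St^{\Delta_i}\big((-1)^nQ_{n,0}^pP_{n,i,s}^p\big) = 0$, which gives the claim. The only point requiring genuine care is the justification that $St^{\Delta_i}$ is a true sign-free derivation on $P_n^{GL_n}$; this is exactly where the even degree of $St^{\Delta_i}$ and the strict commutativity of $P_n$ (all generators in even degree for $p>2$, and characteristic $2$ otherwise) enter, and I would isolate it as the main conceptual ingredient, the sign bookkeeping in the cancellation being entirely routine.
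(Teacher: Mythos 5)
Your proposal is correct and is essentially the paper's own argument: both use the Cartan formula (i.e.\ the derivation property of the primitive operation $St^{\Delta_i}$) together with Theorem~\ref{dlc}, after which the two terms carrying $R_{n,i}^pQ_{n,0}^pQ_{n,s}$ cancel, leaving $(-1)^nQ_{n,0}^pP_{n,i,s}^p$. Your explicit justification of the kernel membership --- writing $Q_{n,0}^pP_{n,i,s}^p=(Q_{n,0}P_{n,i,s})^p$ and noting that a derivation in characteristic $p$ annihilates $p$-th powers --- is a point the paper leaves implicit, and it is handled correctly.
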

\begin{proof} Since $St^{\Delta_i}$ is a derivation, using Theorem \ref{dlc}, we get
\begin{align*}
St^{\Delta_{i}}(Q_{n,0}^{p-1}Q_{n,s}) &=  (p-1)Q_{n,0}^{p-2}St^{\Delta_{i}}(Q_{n,0})Q_{n,s} + Q_{n,0}^{p-1}St^{\Delta_{i}}(Q_{n,s})\\
&= (-1)^n\big(- Q_{n,0}^{p}R_{n,i}^pQ_{n,s} + Q_{n,0}^{p}(P_{n,i,s}^p + R_{n,i}^pQ_{n,s})\big)\\
&= (-1)^nQ_{n,0}^{p}P_{n,i,s}^p.  
\end{align*}
The corollary is proved.
\end{proof}

\bigskip
{}

\end{document}